\def\doctype{}
\renewcommand\L{\mathrm{L}}
\newcommand\M{\mathrm{M}}
\renewcommand\S{\mathrm{S}}
\newcommand{\comment}[1]{}
\newcommand{\G}{\mathcal{G}}
\newcommand{\magma}{{\sc Magma}}
\newcommand{\gap}{{\sf GAP}}
\let\oldsection\section
\newcommand\boldsection[1]{\oldsection{\bf #1}}
\newcommand\starsection[1]{\oldsection*{\bf #1}}
\renewcommand\section{\@ifstar\starsection\boldsection}
\newtheoremstyle{algorithm}
  {12pt}		  % space above
  {0pt}  % space below
  {\tt}  % bofy font
  {\parindent}     % ident - empty=no indent,  \parindent= paragraph indent
  {\bf}  % thm head font
  {. }    % punctuation after thm head
  {\newline}    % space after thm head: `` ``=normal \newline=linebreak
  {}     % thm head specification
\theoremstyle{algorithm}
\newtheoremstyle{theorem}
  {12pt}		  % space above
  {0pt}  % space below
  {\sl}  % bofy font
  {\parindent}     % ident - empty=no indent,  \parindent= paragraph indent
  {\bf}  % thm head font
  {. }    % punctuation after thm head
  { }    % space after thm head: `` ``=normal \newline=linebreak
  {}     % thm head specification
\theoremstyle{theorem}
\newtheorem{thm}{Theorem}[section]  % 1st argument is your name for it
\newtheorem{lemma}[thm]{Lemma}     % 2nd argument is what is printed
\newtheorem{cor}[thm]{Corollary}
\newtheorem{conj}[thm]{Conjecture}
\newtheorem{prop}[thm]{Proposition}
\newtheoremstyle{definition}
  {12pt}		  % space above
  {0pt}  % space below
  {}  % bofy font
  {\parindent}     % ident - empty=no indent,  \parindent= paragraph indent
  {\bf}  % thm head font
  {. }    % punctuation after thm head
  { }    % space after thm head: `` ``=normal \newline=linebreak
  {}     % thm head specification
\theoremstyle{definition}
\renewcommand{\proofname}{Proof}
\renewenvironment{proof}[1][\proofname]{\par
  \pushQED{\qed}%
  \normalfont \partopsep=\z@skip \topsep=\z@skip
  \trivlist
  \item[\hskip\labelsep
        \scshape
    #1\@addpunct{.}]\ignorespaces
}{%
  \popQED\endtrivlist\@endpefalse
}
\renewcommand*\@maketitle{%
  \normalfont\normalsize
  \@adminfootnotes
  \@mkboth{\@nx\shortauthors}{\@nx\shorttitle}%
  \global\topskip0\p@\relax % 5.5pc   "   "   "     "     "
  \@settitle
  \ifx\@empty\authors \else {\vskip 1em
\vtop{\centering\shortauthors\@@par}} \fi
  \ifx\@empty\@date \else {\vskip 1em \vtop{\centering\@date\@@par}}\fi % MY CHANGE
  \ifx\@empty\@dedicatory
  \else
    \baselineskip18\p@
    \vtop{\centering{\footnotesize\itshape\@dedicatory\@@par}%
      \global\dimen@i\prevdepth}\prevdepth\dimen@i
  \fi
  \@setabstract
  \normalsize
  \if@titlepage
    \newpage
  \else
    \dimen@34\p@ \advance\dimen@-\baselineskip
    \vskip\dimen@\relax
  \fi
} % end \@maketitle
\renewcommand*\@adminfootnotes{%
  \let\@makefnmark\relax  \let\@thefnmark\relax
%  \ifx\@empty\@date\else \@footnotetext{\@setdate}\fi% MY CHANGE
  \ifx\@empty\@subjclass\else \@footnotetext{\@setsubjclass}\fi
  \ifx\@empty\@keywords\else \@footnotetext{\@setkeywords}\fi
  \ifx\@empty\thankses\else \@footnotetext{%
    \def\par{\let\par\@par}\@setthanks}%
  \fi
\thispagestyle{titlepage}
}
\begin{document}
\title[]{\large Greedy base sizes \\for sporadic simple groups}

\author{Coen del Valle}
\address{
School of Mathematics and Statistics,
University of St Andrews, St Andrews, UK
}
\email{cdv1@st-andrews.ac.uk}

\thanks{The author is grateful to his supervisors Colva Roney-Dougal and Peter Cameron for their support and guidance. The author would also like to thank Tim Burness for his helpful email communication. Research of Coen del Valle is supported by the Natural Sciences and Engineering Research Council of Canada (NSERC), [funding reference number PGSD-577816-2023], as well as a University of St Andrews School of Mathematics and Statistics Scholarship.}
\keywords{base size, sporadic groups}

\date{\today}

\begin{abstract}
A base for a permutation group $G$ acting on a set $\Omega$ is a sequence $\mathcal{B}$ of points of $\Omega$ such that the pointwise stabiliser $G_{\mathcal{B}}$ is trivial. Denote the minimum size of a base for $G$ by $b(G)$. There is a natural greedy algorithm for constructing a base of relatively small size; denote by $\mathcal{G}(G)$ the maximum size of a base it produces. Motivated by a long-standing conjecture of Cameron, we determine $\G(G)$ for every almost simple primitive group $G$ with socle a sporadic simple group, showing that $\mathcal{G}(G)=b(G)$.
\end{abstract}

\maketitle
\hrule

\bigskip

\section{Introduction}
Let $G\leq \mathrm{Sym}(\Omega)$ for some finite set $\Omega$. A \emph{base} for $G$ is a sequence $\mathcal{B}$ of points of $\Omega$, such that the pointwise stabiliser $G_{\mathcal{B}}$ is trivial. Bases are integral objects in modern computational group theory (see e.g.~\cite{ser}), with the complexity of many algorithms depending heavily on the size of base used. Accordingly, finding small bases has been a topic of significant interest for more than 30 years; the size of a smallest possible base for $G$ is called the \emph{base size} of $G$, and is denoted $b(G)$.

In general, computing the base size of an arbitrary permutation group is a difficult task. Indeed, in 1992, Blaha~\cite{blaha} showed that the problem of determining the base size of a group is NP-hard. Despite this, there are many well-known asymptotically sharp bounds and precise formulae for certain families of permutation groups. For example, the base size of every almost simple primitive group with socle not of Lie type has been determined recently as a result of work spanning almost two decades~\cite{burn2,bspo,cdvrd,MeSp,ms}. Moreover, those with socle of Lie type which are not classical groups in subspace actions are known to have base size at most 6, with the precise base size known in many cases~\cite{bls}.

Several of the aforementioned results have non-constructive proofs, and so finding a minimum base remains a challenging problem. To this end, there is a natural greedy algorithm  first introduced by Blaha~\cite{blaha} for constructing an explicit base of small size: set $\mathcal{B}_1=(\beta_1)$ where $\beta_1$ is a point in a largest $G$-orbit, and for $i\geq 2$ let $\mathcal{B}_i:=(\beta_1,\beta_2,\dots,\beta_i)$, where $\beta_i$ is a point in a largest orbit of $G_{\mathcal{B}_{i-1}}$. Since $\Omega$ is finite, there is some $j\geq1$ minimal such that $\mathcal{B}_j$ is a base --- called a \emph{greedy base} --- for $G$. By the orbit-stabiliser lemma, choosing points in this way guarantees that $G_{\mathcal{B}_i}$ is as small as possible relative to $G_{\mathcal{B}_{i-1}}$, thus one should expect to construct a small base. Let $\mathcal{G}(G)$ denote the maximum size of a greedy base for $G$. Blaha~\cite{blaha} showed that the greedy approximation is nearly sharp, that is, there is some absolute constant $d$ such that $$\mathcal{G}(G)\leq db(G)\log \log |\Omega|,$$
and moreover, for any $k\geq 2$ and $n$ sufficiently large, there exists a group $G$ of degree $n$ such that $b(G)=k$ and $\mathcal{G}(G)\geq\frac{1}{5}k\log\log n$. In the case of primitive groups, more is conjectured to be true.

\begin{conj}[Cameron's greedy conjecture~\cite{cam}]\label{cc}
There is some absolute constant $c$ such that if $G$ is a finite primitive permutation group then $\mathcal{G}(G)\leq cb(G)$.
\end{conj}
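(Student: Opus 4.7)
The plan is to attempt a proof via reduction using the O'Nan--Scott theorem, which partitions finite primitive groups into the affine, almost simple, diagonal, product action, and twisted wreath classes. For each class I would aim to show $\G(G)\leq c\, b(G)$ for some uniform absolute constant $c$, recovering the conjecture by taking the maximum.

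For the almost simple case, I would split by socle using the classification of finite simple groups. The sporadic case is settled by the main result of the present paper, which in fact gives the stronger equality $\G(G)=b(G)$. For alternating socle I would combine the classification of primitive actions (on $k$-subsets, uniform partitions, and product actions thereof) with explicit orbit analyses of point stabilisers, building on the known base size formulas of James, Halasi, and Burness--Guralnick--Saxl. For Lie type socle I would split by the Burness--Liebeck--Shalev dichotomy: when $G$ is exceptional, or classical in a non-subspace action, one has $b(G)\leq 6$, so the entire task reduces to producing any uniform bound on $\G(G)$; for subspace actions of classical groups, where $b(G)$ may grow with the rank, I would use the geometric description of stabiliser orbits on flags to show that the greedy algorithm extends a partial flag in at most a bounded multiple of the optimal length.

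For the non-almost-simple O'Nan--Scott classes I would proceed structurally. In the affine case $G=V\rtimes G_0$ with $V=\F_p^d$, the problem becomes a linear one about the longest strictly decreasing chain of pointwise stabilisers arising from greedy vector choices, accessible via the bounds of Halasi--Podoski and Liebeck--Shalev on $b(G_0)$. For the diagonal, product, and twisted wreath types, I would try to bound $\G$ for $G$ in terms of $\G$ and $b$ of the almost simple components, reducing (at a multiplicative cost) to the almost simple analysis above.

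The principal obstacle will be the product action class. If $G\leq H\wr S_k$ acts on $\Delta^k$, then a greedy base point $\beta_1\in\Delta^k$ is a tuple whose stabiliser does \emph{not} factor as a direct product of subgroups of $H$, so one cannot naively reduce to the factor $H\acts\Delta$; the interaction between the top group $S_k$ and the base group causes the greedy algorithm to make choices that are genuinely global in nature. A secondary obstacle is subspace actions of classical groups of unbounded rank, where bounding the gap between the optimal flag length and the greedy flag length appears to require new estimates on stabiliser orbit sizes in geometric representations. In both cases I would seek to adapt the orbit-computation techniques developed here for sporadic socles --- in particular, controlling orbit lengths of intermediate stabilisers via maximal subgroup structure --- but I expect that obtaining a uniform constant $c$ across all of these infinite families is where the argument will stall, and that fully resolving the conjecture will require substantial new ideas beyond this reduction.
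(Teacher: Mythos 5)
There is a fundamental mismatch here: the statement you are asked to prove is stated in the paper as a \emph{conjecture}, and the paper does not prove it --- indeed it explicitly records that Cameron's greedy conjecture has been open for twenty-five years. The paper's contribution is only the almost simple case with sporadic socle (Theorem~\ref{main}), where the stronger equality $\G(G)=b(G)$ is established by computational and probabilistic means. Your proposal is therefore not comparable to any proof in the paper, and, more importantly, it is not a proof at all: it is a research programme whose every step beyond citing the present paper's result is left unexecuted, and you yourself concede that the argument ``will stall'' at the product action class and at subspace actions of unbounded rank.

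Beyond the global issue, several of the individual reduction steps you describe conceal genuine difficulties rather than routine verifications. Even in the régime where $b(G)\leq 6$ (exceptional groups and non-subspace classical actions), a uniform bound on $\G(G)$ does not follow from a bound on $b(G)$: Blaha's general estimate only gives $\G(G)\leq d\,b(G)\log\log|\Omega|$, and the $\log\log|\Omega|$ factor is exactly what the conjecture asks you to remove, so ``the entire task reduces to producing any uniform bound on $\G(G)$'' is a restatement of the problem, not a reduction of it. Similarly, the claim that $\G$ for diagonal, product, and twisted wreath types can be bounded ``at a multiplicative cost'' in terms of the almost simple components is precisely the hard content in those cases --- the greedy algorithm's orbits on $\Delta^k$ do not decompose along the factors, as you note, and no mechanism for controlling this is offered. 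In the affine case, the greedy chain of point stabilisers in $V\rtimes G_0$ is not governed solely by $b(G_0)$ in its linear action; the known bounds of Halasi--Podoski concern $b$, not $\G$, and transferring them requires an argument you have not supplied. In short, the only part of your outline that is actually established is the sporadic socle case, and that is the content of the paper itself; everything else is a (reasonable) roadmap for future work, which is consistent with the paper's framing of the statement as an open conjecture rather than a theorem.
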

Cameron's greedy conjecture has remained open for the past 25 years, although recent work has begun making progress towards answering it in the affirmative~\cite{cdvrd2}. In this paper we determine precisely the greedy base size of every almost simple primitive group with sporadic socle and show that the base size and the greedy base size coincide. Although this analysis may not be strictly essential to the proof of Cameron's greedy conjecture, it is a key step towards determining the optimal constant $c$. 
\begin{thm}\label{main}
Let $G$ be an almost simple primitive group with sporadic socle. Then $\G(G)=b(G)$.
\end{thm}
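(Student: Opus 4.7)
The inequality $\G(G) \geq b(G)$ is automatic since every greedy base is a base and so has size at least $b(G)$. The values of $b(G)$ for every almost simple primitive group with sporadic socle are available in the literature cited above, so the content of the theorem is the opposite bound $\G(G) \leq b(G)$. Since each sporadic group (together with its outer automorphisms) has only finitely many conjugacy classes of maximal subgroups, there are only finitely many primitive actions in total, and the theorem reduces to a finite, explicit case analysis.

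The approach I would take is to reformulate the greedy algorithm subgroup-theoretically. Fix $\beta_1 \in \Omega$ and write $H = G_{\beta_1}$; since the $K$-orbit on $G/H$ containing $gH$ has length $|K|/|K \cap H^g|$, a greedy base of length $k$ corresponds to a strictly descending chain $H = K_1 > K_2 > \cdots > K_k = 1$ in which each $K_{i+1}$ is an intersection $K_i \cap H^{g_i}$ of minimal possible order as $g_i$ varies over representatives of the double cosets in $K_i \backslash G / H$. Thus $\G(G)$ is the maximum length of such a chain, and the theorem is equivalent to the claim that every admissible chain reaches the trivial subgroup within $b(G)$ steps. For each pair $(G, H)$ the task then is to enumerate these chains --- accounting for every way to break ties for the minimum --- and verify the bound.

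For pairs $(G, H)$ of manageable index, a faithful permutation representation on $\Omega = G/H$ can be constructed in \magma\ or \gap\ and the greedy chains enumerated directly, dispatching the bulk of the cases. For the very large-degree actions --- notably those involving $\mathrm{Fi}_{24}'$, $\mathrm{Th}$, $\mathrm{B}$ and $\mathrm{M}$ --- I would instead use permutation-character techniques: orbit lengths of a subgroup $K \leq G$ on $G/H$ coincide with $(K, H)$-double coset lengths and are recoverable from $(1_H^G)|_K$ via the character table library together with the Atlas subgroup fusion data; the isomorphism type of each intersection $K \cap H^g$ can then be pinned down by matching its induced character against the known subgroup lattice of $G$. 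The principal obstacle is this stabiliser-identification step inside the Monster and Baby Monster, where the maximal subgroup classification has only recently been completed and where the intersections $K \cap H^g$ typically lie strictly below the maximal subgroups, forcing a careful descent through the subgroup lattice with fusion bookkeeping. The saving grace is that $b(G)$ is bounded by a very small constant for every sporadic socle, so the recursion terminates after only a few steps, keeping the computation tractable in modern computer algebra systems.
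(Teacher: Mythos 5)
Your reduction to a finite case analysis, and your plan to run the greedy algorithm directly on a permutation representation of $G/H$ for the manageable indices, match the paper's treatment of the bulk of the cases. The gap is in your handling of the large-degree actions, and it is a genuine one rather than a matter of bookkeeping. First, the restricted permutation character $(1_H^G)|_K$ does not determine the individual orbit lengths of $K$ on $G/H$: it gives the number of $K$-orbits and fixed-point counts, not the partition of $|G:H|$ into orbit sizes, so your double-coset lengths are not "recoverable" from it. Second, the stabiliser-identification step you flag as the "principal obstacle" is not merely delicate but infeasible as described: for $\mathbb{M}$ acting on the cosets of $2.\mathbb{B}$ the degree is about $1.2\times 10^{19}$, the suborbit structure is not known, and the subgroup lattice below the maximal level is not available to match induced characters against. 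No amount of "careful descent with fusion bookkeeping" closes this; you need a method that avoids identifying the intersections $K\cap H^g$ altogether.

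That method is the missing idea, and it is probabilistic. The paper extends the Liebeck--Shalev fixed-point-ratio machinery: setting $\hat{Q}(G,3)=\sum_i |x_i^G|\bigl(|x_i^G\cap H|/|x_i^G|\bigr)^3$ over prime-order class representatives (computable purely from the character tables of $G$ and $H$ and the class fusion), one shows that if $\hat{Q}(G,3)<1$ and $G$ has a subdegree exceeding $|G:H|\bigl(1-\sqrt{1-\hat{Q}(G,3)}\bigr)$, then for \emph{every} point $\beta$ in a largest $G_\alpha$-orbit the stabiliser $G_{\alpha,\beta}$ has a regular orbit --- because a random triple is simultaneously a base and has its second coordinate in the prescribed largest suborbit with positive probability. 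This certifies $\G(G)=3$ independently of how ties are broken and without computing a single two-point stabiliser. The required subdegree lower bound comes cheaply from the rank via the pigeonhole estimate $(|G:H|-1)/(\langle 1_H^G,1_H^G\rangle-1)$, from known subdegree data in the literature, or (in a dual formulation) from a random search for one conjugate with $|H\cap H^g|$ small. The single base-size-4 case $(\mathbb{B},\,2.{}^2E_6(2){:}2)$ needs an ad hoc two-level version of the same argument using the known subdegrees of $\mathbb{B}$ on that action. Without some substitute for this probabilistic step, your proposal does not go through on the cases where the theorem is actually hard.
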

\begin{rk} The base sizes for such groups are known and can be found in~\cite{bspo,Bfin}.\end{rk}

Our methods are a mixture of computational and probabilistic techniques. We make significant use of the computer algebra systems \gap~\cite{gap} and \magma~\cite{magma}, and introduce a probabilistic technique which is used as an extension to the marvelous method of fixed point ratios introduced by Liebeck and Shalev~\cite{LieSha} in their celebrated proof of the Cameron--Kantor conjecture~\cite{ck}.

The structure of the paper is straightforward. In Section~\ref{tools} we familiarise the reader with the standard computational techniques to be used in proving Theorem~\ref{main}, and we also prove two probabilistic results which will be essential to our study. Finally, Section~\ref{pmain} is dedicated to using the tools introduced in Section~\ref{tools} to prove Theorem~\ref{main}. Throughout this paper we use the standard $\mathbb{ATLAS}$ notation~\cite{atlas}.
\begin{section}{Techniques and preliminaries}\label{tools}
In this section we cover some of the preliminary theory which is needed for the proof of Theorem~\ref{main}. We begin by summarising the base sizes of almost simple primitive groups with sporadic socle.

\begin{thm}[{\cite[Theorem 1]{bspo}, \cite[Theorem 1]{Bfin}}] Let $G$ be an almost simple primitive group with sporadic socle. Suppose $G$ has point stabiliser $H$, and that $b(G)\ne 2$. Then either $(G,H,b(G))$ is displayed in \cite[Tables 1 and 2]{bspo} or $(G,H,b(G))=(\mathbb{B},2^{2+10+20}.(\M_{22}:2\times \S_3), 3)$.
\end{thm}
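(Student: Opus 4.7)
The plan is to handle one almost simple group $G$ with sporadic socle at a time, and within each $G$ to run through all conjugacy classes of maximal subgroups $H$ --- catalogued in the $\mathbb{ATLAS}$ for all but the Monster, and for $\mathbb{M}$ available from more recent classification work. Each such $H$ gives one primitive action on $\Omega = G/H$, and the task is to determine $b(G)$ exactly. Since the statement only concerns $b(G) \geq 3$, one first isolates the (comparatively few) maximal subgroups $H$ that are too large to admit a regular $2$-tuple, and then pins down $b(G)$ in those cases.

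The main upper-bound tool is the probabilistic method of Liebeck--Shalev. Writing $\mathrm{fpr}(x) = |x^G \cap H|/|x^G|$, the proportion of $b$-tuples in $\Omega^b$ that fail to be a base satisfies
\[
Q(G,b) \;\leq\; \sum_{x \in \mathcal{C}} |x^G| \cdot \mathrm{fpr}(x)^b,
\]
where $\mathcal{C}$ runs over non-identity conjugacy class representatives. Whenever $Q(G,b)<1$ a base of size $b$ exists, so $b(G) \leq b$. The class-intersection numbers $|x^G \cap H|$ can be extracted from the permutation character of $G$ on $\Omega$, which in turn is encoded in the \gap{} character table library together with the class fusion from $H$ to $G$. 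For matching lower bounds one uses the orbit--stabiliser inequality $b(G) \geq \lceil \log|G|/\log|H|\rceil$ and, when sharper information is needed, orbit computations of $G$ on $\Omega^2$ (or higher powers) to verify that no short tuple is regular.

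For the smaller and moderately-sized sporadics one can load $G$ in \gap{} or \magma{} as an explicit permutation group and compute $b(G)$ directly by iterating the orbit algorithm on stabilisers. For the giants --- principally $\mathbb{B}$ and $\mathbb{M}$ --- the degrees are usually prohibitive and one must rely exclusively on the $Q(G,b)$ bound with character-theoretic inputs. The main obstacle is the handful of actions in which the crude sum $Q(G,b(G))$ fails to be less than $1$: here one sharpens the analysis by separating off the large-$\mathrm{fpr}$ classes (typically involutions and other low-order elements lying in $H$), estimating their contributions individually from the structure of $H$, and, if still necessary, exhibiting an explicit base of the claimed length by hand. The anomalous case $(\mathbb{B},\,2^{2+10+20}.(\M_{22}:2\times \S_3),\,3)$ is precisely of this delicate type: the strong $2$-local structure of $H$ produces large intersections with the involution classes of $\mathbb{B}$, inflates $Q(\mathbb{B},2)$ well above $1$, and so demands a finer argument to confirm that three points suffice but two do not.
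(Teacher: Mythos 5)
This theorem carries no proof in the paper at all: it is quoted from \cite{bspo} and \cite{Bfin}, so the only meaningful comparison is with those sources, and against them your outline captures the right overall strategy --- run through the maximal subgroups of each almost simple group with sporadic socle, bound $b(G)$ from above by the Liebeck--Shalev fixed-point-ratio sum computed from character tables and fusion maps, fall back on explicit permutation-group computation for small degrees, and treat the recalcitrant cases individually. Two points, however, need correcting. First, your lower bound $b(G)\ge\lceil\log|G|/\log|H|\rceil$ is not a valid inequality (it diverges as $H$ shrinks, while $b(G)$ drops to $1$); the order bound furnished by a base of size $b$ is $|G|\le|G{:}H|^{b}$, equivalently $|H|\le|G{:}H|^{b-1}$, so the denominator must be $\log|G{:}H|$, not $\log|H|$. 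Second, your account of the exceptional case $(\mathbb{B},\,2^{2+10+20}.(\M_{22}{:}2\times\S_3))$ misattributes the difficulty: fixed-point-ratio sums only ever give \emph{upper} bounds on $b(G)$, so no amount of separating off large-$\mathrm{fpr}$ classes can show that two points do \emph{not} suffice. That case was left open in \cite{bspo} and settled only in \cite{Bfin} by a genuinely different computational technique (orbit invariants, i.e.\ randomised enumeration of the suborbits of an action of degree roughly $10^{17}$, showing in particular that no suborbit is regular); this is precisely why it appears as a separate clause with a separate citation in the statement. Your general remark that lower bounds come from ``orbit computations on $\Omega^2$'' gestures at the right idea, but for the Baby Monster this step is the entire content of \cite{Bfin} rather than a routine verification.
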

Our results are obtained through directly calculating the greedy base sizes, and comparing these with the values in the tables of Burness, O'Brien, and Wilson~\cite{bspo}. In the case of groups with base size 2, the following observation is immediate.
\begin{lemma}\label{b=2}
Let $G$ be a transitive group with $b(G)=2$. Then $\mathcal{G}(G)=2$.
\end{lemma}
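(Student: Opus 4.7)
The plan is to show that, under the hypothesis $b(G)=2$, every execution of the greedy algorithm terminates after exactly two steps, which gives $\mathcal{G}(G)=2$.

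First I would observe that since $G$ is transitive, the largest (indeed the only) $G$-orbit on $\Omega$ is $\Omega$ itself, so $\beta_1$ may be chosen to be any point of $\Omega$; let $H=G_{\beta_1}$. The greedy algorithm then picks $\beta_2$ in a largest $H$-orbit on $\Omega$. The key observation is that since $b(G)=2$, there exist $\alpha,\alpha'\in\Omega$ with $G_{\alpha,\alpha'}=1$; by transitivity of $G$, we can choose $g\in G$ with $g\cdot\alpha=\beta_1$, and setting $\gamma=g\cdot\alpha'$ we get $H_\gamma = G_{\beta_1,\gamma}=gG_{\alpha,\alpha'}g^{-1}=1$. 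Hence the $H$-orbit of $\gamma$ is regular, i.e.\ has size $|H|$.

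Next, I would note that every $H$-orbit has size dividing $|H|$ and so has size at most $|H|$. Combined with the existence of a regular orbit above, this means that the largest $H$-orbits on $\Omega$ are exactly those of size $|H|$, i.e.\ the regular orbits. Therefore \emph{any} point $\beta_2$ chosen by the greedy algorithm lies in a regular $H$-orbit and so satisfies $H_{\beta_2}=1$, which gives $G_{\beta_1,\beta_2}=1$. Thus $(\beta_1,\beta_2)$ is a base, so the algorithm terminates at step 2 and $\mathcal{G}(G)\leq 2$. Since $\mathcal{G}(G)\geq b(G)=2$ trivially, equality follows.

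There is no real obstacle here: the argument is just the orbit-stabiliser lemma together with transitivity. The only thing worth double-checking is that the choice of $\beta_1$ is immaterial (which it is, by transitivity), so that the conclusion $\mathcal{G}(G)=2$ holds for the maximum over all runs of the algorithm, not merely for some particular run.
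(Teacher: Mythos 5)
Your proof is correct and is essentially the paper's argument written out in full: the paper's one-line proof ``The group $G$ has a regular suborbit'' is exactly the content you establish via transitivity and the orbit--stabiliser lemma, namely that the largest orbits of a point stabiliser are the regular ones, so the greedy algorithm's second point always completes a base. No differences worth noting.
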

\begin{proof}
The group $G$ has a regular suborbit.
\end{proof} 
For the remainder of the paper we consider groups with base size at least 3. We have three main techniques for such groups which we describe now. 

Let $G$ be a faithful transitive permutation group with point stabiliser $H$. If both the degree and order of $G$ are sufficiently small, and a permutation representation of $G$ is available (or can be constructed) in \magma~\cite{magma}, we can simply run the greedy algorithm with a backtrack search to determine the quantity $\G(G)$. The other two techniques are probabilistic and are more or less the same as each other. If $b(G)=3$ and the degree of $G$ is sufficiently small, we can often prove that $G$ has a suborbit which consists of a large proportion of the total points; we will see in Proposition~\ref{key} that this is enough to deduce that $\G(G)=3$. On the other hand, if $b(G)=3$ but the degree of $G$ is large, then $|H|$ will be small enough that there is some conjugate $H^g$ such that $|H\cap H^g|$ can be computed in \magma~\cite{magma} --- in this case we run a random search until finding a conjugate such that $|H\cap H^g|$ is sufficiently small, which allows us to deduce that $\G(G)=3$ from Corollary~\ref{int}.

Before introducing our probabilistic techniques we describe the powerful method first introduced by Liebeck and Shalev~\cite{LieSha} upon which our approach is based. Let $G$ be a permutation group acting faithfully on a set $\Omega$. The \emph{fixed point ratio} of $x\in G$ is the ratio $$\mathrm{fpr}(x,\Omega):=\frac{|\{\alpha\in\Omega : \alpha^x=\alpha\}|}{|\Omega|}.$$ That is, $\mathrm{fpr}(x,\Omega)$ is the probability that a uniformly chosen point $\alpha\in\Omega$ is fixed by $x$. Note that if $G$ is transitive with point stabiliser $H$ then $$\mathrm{fpr}(x,\Omega)=\mathrm{fpr}(x,G/H)=\frac{|x^G\cap H|}{|x^G|}.$$ Now, let $Q(G,c)$ be the probability that a uniformly chosen $c$-tuple of points of $\Omega$ is not a base for the transitive group $G$. Let $\mathcal{P}$ be the set of prime order elements of $G$, and $x_1,x_2,\dots, x_k$ representatives of the distinct classes of prime order elements in $G$. If a $c$-tuple is not a base then it is fixed by an element of prime order and so $$Q(G,c)\leq \sum_{x\in \mathcal{P}}\mathrm{fpr}(x,\Omega)^c=\sum_{i=1}^k |x_i^G|\cdot\left(\frac{|x_i^G\cap H|}{|x_i^G|}\right)^c=:\hat{Q}(G,c).$$ An immediate consequence is the following lemma.

\begin{lemma}\label{Q}
Let $G$ be a finite transitive group and let $c$ be a positive integer. If $\hat{Q}(G,c)<1$ then $b(G)\leq c$.
\end{lemma}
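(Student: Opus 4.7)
The plan is essentially to invoke the probabilistic inequality $Q(G,c)\leq \hat Q(G,c)$ derived in the paragraph immediately preceding the lemma. That derivation rests on two observations: first, a $c$-tuple $(\alpha_1,\dots,\alpha_c)\in\Omega^c$ fails to be a base precisely when its pointwise stabiliser is nontrivial, which by Cauchy's theorem happens if and only if some prime order element $x\in\mathcal{P}$ fixes every $\alpha_i$; and second, since the coordinates of a uniform random $c$-tuple are independent uniform points of $\Omega$, the probability that a fixed $x$ fixes all of them is exactly $\mathrm{fpr}(x,\Omega)^c$. A union bound over $\mathcal{P}$ then yields $Q(G,c)\leq\sum_{x\in\mathcal{P}}\mathrm{fpr}(x,\Omega)^c$, and since $\mathrm{fpr}(\cdot,\Omega)$ is constant on conjugacy classes, grouping by class produces the expression $\hat Q(G,c)$.

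With that bound in hand, the conclusion of the lemma is immediate. Suppose $\hat Q(G,c)<1$. Then $Q(G,c)<1$, so the number of $c$-tuples which fail to be bases is strictly less than $|\Omega|^c$, meaning there exists at least one $c$-tuple $(\alpha_1,\dots,\alpha_c)$ which is a base for $G$. Therefore $b(G)\leq c$, as claimed.

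There is no genuine obstacle here: the lemma is a formal consequence of the inequality $Q(G,c)\leq\hat Q(G,c)$ together with the trivial observation that a probability strictly less than $1$ guarantees the complementary event is nonempty. The only thing one might take care with is making the union bound / Cauchy's theorem step explicit, if it is felt that the preceding discussion did not state it formally enough to cite. In that case I would spell out the two observations above in one or two sentences before concluding.
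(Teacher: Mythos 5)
Your proposal is correct and follows exactly the route the paper intends: the lemma is stated there as an immediate consequence of the bound $Q(G,c)\leq\hat{Q}(G,c)$ established in the preceding paragraph, and your argument (nontrivial stabiliser contains a prime order element by Cauchy, union bound over classes, then a probability below $1$ forces the existence of a base $c$-tuple) is precisely that reasoning spelled out.
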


It is the above key observation that we extend to greedy bases. Before formally describing our main tool, we begin by providing the intuition. Suppose that $G$ is a group acting transitively on some set $\Omega$ such that $b(G)=3$. If it is very likely that a uniformly chosen 3-tuple $(\alpha_1,\alpha_2,\alpha_3)$ has at least one of $\alpha_2$ or $\alpha_3$ in some fixed largest $G_{\alpha_1}$-orbit --- that is, if $G$ has a very large greatest subdegree --- and also likely that $(\alpha_1,\alpha_2,\alpha_3)$ form a base, then with non-zero probability both will occur simultaneously. If both events occur then some permutation of $(\alpha_1,\alpha_2,\alpha_3)$ forms a greedy base.

\begin{prop}\label{key}
Let $G$ be a group acting transitively and faithfully on a finite set $\Omega$ with point stabiliser $H$. Suppose that $\hat{Q}(G,3)<1$ and that $G$ has a subdegree which is greater than $$\mathcal{D}(G,H):=|G:H|\cdot\left(1-\sqrt{1-\hat{Q}(G,3)}\right)$$ 
then $\mathcal{G}(G)\leq3$.
\end{prop}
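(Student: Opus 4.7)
The plan is to formalise the probabilistic heuristic laid out immediately before the statement. If $b(G) \leq 2$, then Lemma~\ref{b=2} already yields the conclusion, and Lemma~\ref{Q} combined with $\hat{Q}(G,3) < 1$ forces $b(G) \leq 3$; so we may assume $b(G) = 3$ throughout.

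I would fix $\alpha_1 \in \Omega$ and a largest $G_{\alpha_1}$-orbit $\Delta$. By $G$-transitivity, $|\Delta|$ equals the largest subdegree $k$, which by hypothesis exceeds $\mathcal{D}(G,H)$. Sampling $\alpha_2, \alpha_3$ uniformly and independently from $\Omega$, let $E$ be the event that $(\alpha_1, \alpha_2, \alpha_3)$ is a base for $G$, and $F$ the event that $\{\alpha_2, \alpha_3\} \cap \Delta$ is nonempty. The union bound over prime-order elements, together with $G$-transitivity (which kills the conditioning on $\alpha_1$), yields $\Pr[E^c] \leq \hat{Q}(G,3)$, while a direct count gives $\Pr[F^c] = (1 - k/|G:H|)^2$. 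The hypothesis $k > \mathcal{D}(G,H)$ rearranges, upon squaring, to precisely $(1 - k/|G:H|)^2 < 1 - \hat{Q}(G,3)$, so $\Pr[E \cap F] > 0$ and some triple witnesses both events.

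Assuming without loss of generality that $\alpha_2 \in \Delta$, the triple $(\alpha_1, \alpha_2, \alpha_3)$ is a greedy base of length three: $\alpha_2$ lies in a largest $G_{\alpha_1}$-orbit by construction, and because $b(G) = 3$ forces $G_{\alpha_1, \alpha_2}$ to be nontrivial while $G_{\alpha_1, \alpha_2, \alpha_3}$ is trivial, the point $\alpha_3$ lies in a regular — hence maximum-size — $G_{\alpha_1, \alpha_2}$-orbit. To upgrade this to the bound $\G(G) \leq 3$ on the maximum greedy base length, I would apply the estimate on $\Pr[F^c]$ separately to every largest $G_{\alpha_1}$-orbit (the bound only uses the orbit's size $k$), then use $G_{\alpha_1}$-transitivity on each such orbit to promote ``some $\alpha_2 \in \Delta$ extends to a base'' to ``every $\beta_2 \in \Delta$ extends to a base'', and finally invoke $G$-transitivity to transfer the conclusion to any first greedy choice $\beta_1 \in \Omega$. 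The main delicacy is precisely this upgrade, since $\G(G)$ is a maximum over all valid greedy choice sequences whereas the probabilistic bound natively produces only a single witness; the orbit-by-orbit argument plus the two transitivities is what closes that gap.
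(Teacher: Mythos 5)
Your proposal is correct and follows essentially the same route as the paper: bound the failure probability of the base event by $\hat{Q}(G,3)$, bound the probability that both random points miss a fixed largest suborbit by $(1-k/|G:H|)^2$, observe that the hypothesis on $\mathcal{D}(G,H)$ makes these probabilities sum to less than $1$, and conclude that the two-point stabiliser along any largest suborbit has a regular orbit. The paper packages the final ``upgrade'' by indexing the largest suborbits equivariantly (as orbitals $O_i(\alpha)$) and letting $\alpha_1$ vary, which is the same conjugacy/transitivity argument you spell out at the end.
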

\begin{proof}
Suppose $G$ has largest subdegree $d>\mathcal{D}(G,H)$. For each $\alpha\in\Omega$ let $O_1(\alpha),O_2(\alpha),\dots,O_k(\alpha)$ be the $G_{\alpha}$-orbits of size $d$. As established, the probability that a uniformly selected 3-tuple is a base is at least $1-\hat{Q}(G,3)$. Fix some $1\leq i \leq k$, and let $p_i$ be the probability that a uniformly chosen 3-tuple $(\alpha_1,\alpha_2,\alpha_3)$ is such that at least one of $\alpha_2$ or $\alpha_3$ is in $O_i(\alpha_1)$. Then $$p_i=\frac{2d}{|G:H|}-\left(\frac{d}{|G:H|}\right)^2>2\left(1-\sqrt{1-\hat{Q}(G,3)}\right)-\left(1-\sqrt{1-\hat{Q}(G,3)}\right)^2=\hat{Q}(G,3).$$ Now, note that $$p_i+(1-\hat{Q}(G,3))> \hat{Q}(G,3)+(1-\hat{Q}(G,3))=1,$$ so there must exist some 3-tuple $(\alpha_1,\alpha_2,\alpha_3)$ which is a base but also satisfies $\alpha_2\in O_i(\alpha_1)$. In particular, $G_{\alpha_1,\alpha_2}$ has a regular orbit. Since $i$ was arbitrary it follows that if $\alpha,\beta\in\Omega$ are such that $\beta$ is in a largest $G_{\alpha}$-orbit, then $G_{\alpha,\beta}$ has a regular orbit. Therefore, $\mathcal{G}(G)=3$, as desired.
\end{proof}
Proposition~\ref{key} is most useful when we can easily obtain information on the subdegrees of $G$ --- when this is not possible we can often use a random search to find a small 2-point stabiliser. The following corollary proves useful in this case, and is immediate from Proposition~\ref{key} and the fact that point stabilisers in transitive groups are conjugate.
\begin{cor}\label{int}
Let $G$ be a finite group acting transitively and faithfully with point stabiliser $H$. Suppose that $\hat{Q}(G,3)<1$ and that there is some $g\in G$ such that $$|H\cap H^g|<\mathcal{S}(G,H):=\frac{|H|^2}{|G|\left(1-\sqrt{1-\hat{Q}(G,3)}\right)}.$$ Then $\mathcal{G}(G)\leq3$.
\end{cor}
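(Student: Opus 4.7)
The plan is to derive this directly from Proposition~\ref{key} by translating its subdegree hypothesis into the language of conjugate-intersections of $H$. Recall that if $G$ acts transitively on $\Omega = G/H$ by right multiplication, then the subdegrees of $G$ are precisely the sizes of the $H$-orbits on $G/H$. A short standard calculation shows that the stabiliser in $H$ of the coset $Hx$ is $H \cap x^{-1}Hx$, so the subdegrees of $G$ are exactly the integers $|H|/|H \cap H^g|$ as $g$ ranges over $G$.

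Given $g \in G$ with $|H \cap H^g| < \mathcal{S}(G,H)$, I would simply rearrange:
\[
\frac{|H|}{|H \cap H^g|} > \frac{|H|}{\mathcal{S}(G,H)} = \frac{|H|\cdot|G|\left(1-\sqrt{1-\hat{Q}(G,3)}\right)}{|H|^2} = |G:H|\left(1-\sqrt{1-\hat{Q}(G,3)}\right) = \mathcal{D}(G,H).
\]
Hence $G$ has a subdegree exceeding $\mathcal{D}(G,H)$, and since $\hat{Q}(G,3) < 1$ by assumption, Proposition~\ref{key} applies to give $\G(G) \leq 3$.

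There is no substantive obstacle here; the content of the corollary is purely a reformulation convenient for the intended computational use. In situations where one cannot readily enumerate the subdegrees of $G$ (for instance, when $|G:H|$ is very large), one can instead sample conjugates $H^g$ at random and compute the order of $H \cap H^g$ inside \magma, and this version of the bound is directly suited to that workflow.
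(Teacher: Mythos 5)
Your proof is correct and matches the paper's, which simply notes that the corollary is immediate from Proposition~\ref{key} together with the fact that two-point stabilisers are of the form $H\cap H^g$, so that $|H:H\cap H^g|$ is a subdegree; your rearrangement showing $|H|/|H\cap H^g|>\mathcal{D}(G,H)$ is exactly the intended computation.
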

\begin{rk}
Importantly, to compute the quantities $\mathcal{D}(G,H)$ and $\mathcal{S}(G,H)$ it is sufficient to know the sizes of conjugacy classes of elements of prime order in $G$ as well as those of $H$ and their fusion in $G$ --- in most cases this is accessible directly from the \gap~Character Table Library~\cite{ctlib}.
\end{rk}

Of course Proposition~\ref{key} and Corollary~\ref{int} only apply when $b(G)\leq 3$, however, we can run the greedy algorithm directly on all but one of the relevant groups with base size greater than 3.
\end{section}
\section{Proving Theorem~\ref{main}}\label{pmain}
In this section we compute the greedy base sizes of all primitive almost simple groups with sporadic socle and base size at least 3, hence proving Theorem~\ref{main}. As was hinted at the end of Section~\ref{tools}, most groups with base size greater than $3$ (and many with base size exactly 3) may easily be dealt with purely computationally.
\begin{prop}\label{smallindex}
Let $G$ be an almost simple primitive group with sporadic socle and point stabiliser $H$. Then either $b(G)=\mathcal{G}(G)$ or $(G,H,b(G))$ is listed in Table~\ref{tab1}.
\end{prop}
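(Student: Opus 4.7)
The plan is to verify $b(G) = \mathcal{G}(G)$ case by case, exploiting the three techniques described in Section~\ref{tools} and collecting anything that resists all of them into Table~\ref{tab1}. Since $\mathcal{G}(G) \geq b(G)$ trivially, only the upper bound $\mathcal{G}(G) \leq b(G)$ requires work. By Lemma~\ref{b=2}, every group with $b(G) = 2$ automatically satisfies $\mathcal{G}(G) = 2$, so attention immediately restricts to the primitive almost simple groups with sporadic socle and $b(G) \geq 3$ -- i.e.\ the entries of \cite[Tables~1 and~2]{bspo} together with the additional case $(\mathbb{B}, 2^{2+10+20}.(\M_{22}{:}2 \times \S_3), 3)$.

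For each such $G$ I would first attempt a direct computational approach. If a faithful permutation representation of $G$ on $\Omega = G/H$ of a manageable degree can be constructed in \magma, the greedy algorithm is simply executed, using a backtrack search to determine a largest orbit of the current stabiliser at each step. This cleanly settles every group with $b(G) \geq 4$ (whose point stabilisers tend to be comparatively large, hence whose degree is small) and many of the groups with $b(G) = 3$.

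For the remaining groups -- those with $b(G) = 3$ but degree too large for direct enumeration -- I would apply Proposition~\ref{key} and Corollary~\ref{int}. The \gap{} Character Table Library supplies the class sizes of prime-order elements in both $G$ and $H$, together with the fusion of $H$-classes in $G$, which is exactly what is needed to compute $\hat{Q}(G,3)$ and hence the thresholds $\mathcal{D}(G,H)$ and $\mathcal{S}(G,H)$. When subdegrees of $G$ on $\Omega$ are tabulated or cheaply computable, I compare the largest one against $\mathcal{D}(G,H)$ and invoke Proposition~\ref{key}; otherwise a short random search in \magma{} through conjugates $H^g$ usually produces one with $|H \cap H^g| < \mathcal{S}(G,H)$, after which Corollary~\ref{int} yields $\mathcal{G}(G) \leq 3$.

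The main obstacle is expected to come from the largest sporadic groups -- the Monster, Baby Monster, and their close relatives -- where constructing a usable permutation representation may be infeasible, and simultaneously the class fusion of $H$ in $G$, or explicit subdegree information, may be unavailable. Any group for which $\hat{Q}(G,3) \geq 1$ also escapes the probabilistic tools. Precisely the groups where none of these three methods succeeds are what must be collected into Table~\ref{tab1}, to be handled individually in the remainder of Section~\ref{pmain}.
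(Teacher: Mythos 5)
Your proposal is correct and, for the cases this proposition actually covers, is essentially the paper's argument: dispose of the $b(G)=2$ cases via Lemma~\ref{b=2}, and for everything else of manageable degree run the greedy algorithm with a backtrack search in \magma{} on the coset action $G/H$ (constructing $H$ via \texttt{MaximalSubgroups} or a Web Atlas straight-line program where necessary), with Table~\ref{tab1} absorbing the cases where this fails. The one organisational difference is that the paper does \emph{not} fold the probabilistic tools into this proposition --- Table~\ref{tab1} is precisely the set of cases where the direct computation is infeasible, and Proposition~\ref{key} and Corollary~\ref{int} are then applied to those entries in the remainder of Section~\ref{pmain}; your version would produce a smaller exceptional table, which still implies the stated result. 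One concrete overstatement: direct computation does \emph{not} settle every group with $b(G)\geq 4$, since $G=\mathbb{B}$ with $H=2.{}^2E_6(2){:}2$ has $b(G)=4$ and degree roughly $1.4\times 10^{10}$; this case lands in Table~\ref{tab1} and ultimately requires the bespoke extension of the probabilistic method in Proposition~\ref{last}, although your own fallback (noting that $\hat{Q}(G,3)\geq 1$ whenever $b(G)\geq 4$) would correctly route it there.
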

\begin{proof}
If $(G,H)$ is not listed in Table~\ref{tab1}, then either \begin{itemize}\item$b(G)=2$ and so $\G(G)=2$ by Lemma~\ref{b=2};\item  the permutation representation of $G$ on $G/H$ exists in \magma~\cite{magma}; or\item some permutation representation of $G$ exists in \magma~\cite{magma} and $H$ may be constructed either through the command {\texttt{MaximalSubgroups(G)}}, or through a straight-line program available from the Web Atlas~\cite{webatlas}.\end{itemize} In the final case, we obtain the permutation representation of $G$ on $G/H$ in \magma~by using the command {\texttt{CosetAction(G,H)}}. We run the greedy algorithm with a backtrack search on the appropriate permutation representation of $G$ to deduce the result.
\end{proof}

The groups of Table~\ref{tab1} have degree too large to be easily dealt with using the explicit greedy algorithm; from here onward, we use the probabilistic techniques developed in Section~\ref{tools}. Before stating the next result we remind the reader of some elementary character theory.

Oftentimes in order to deduce that a permutation group has a large enough suborbit it is sufficient to know that it has few suborbits; we can calculate the number of suborbits --- called the \emph{rank} --- using character theory. Indeed, given the permutation character $\chi$ of a group $G$ acting on its coset space $G/H$, the rank of $G$ is precisely $\langle \chi,\chi\rangle$, where $\langle -,-\rangle$ is the standard scalar product on class functions. Moreover, such permutation characters can be computed as the induction $1^G_H$ to $G$ of the trivial character of $H$. It follows that with the help of the \gap ~Character Table Library~\cite{ctlib} we can find the rank of $G$ with very little effort.

To illustrate some of our techniques in practice we give a descriptive proof in the case that $G=\mathrm{Fi}_{23}$.

\begin{prop}
Let $G=\mathrm{Fi}_{23}$ acting primitively with point stabiliser $H$. Then $b(G)=\mathcal{G}(G)$.
\end{prop}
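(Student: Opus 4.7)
The plan is to work through each maximal subgroup $H$ of $G = \mathrm{Fi}_{23}$ appearing in Table~\ref{tab1} individually --- these are the only actions not already resolved by Proposition~\ref{smallindex}. By the theorem of Burness--O'Brien--Wilson, each such action satisfies $b(G) = 3$, so it suffices to establish $\mathcal{G}(G) \leq 3$ in each case. I would try to do this via Proposition~\ref{key}, falling back on Corollary~\ref{int} where necessary.

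The first step is to compute $\hat{Q}(G, 3)$ for each relevant $H$. Using the \gap~Character Table Library~\cite{ctlib}, I would access the character tables of $G = \mathrm{Fi}_{23}$ and of $H$, together with the class fusion $H \to G$; this yields $|x^G|$ and $|x^G \cap H|$ for every class of prime-order elements $x \in G$, and hence $\hat{Q}(G, 3)$ directly. Since $b(G) = 3$, I expect $\hat{Q}(G, 3) < 1$ in every case, and in fact quite small for the low-index actions.

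The second step is to read off the subdegrees. The rank $\langle 1_H^G, 1_H^G\rangle$ is immediate from the character tables, and the subdegrees themselves can be extracted from the decomposition of the permutation character $1_H^G$ into $G$-irreducibles, together with the values of these irreducibles on class representatives from $H$. If the largest subdegree $d$ exceeds the threshold $\mathcal{D}(G, H) = |G:H|\bigl(1 - \sqrt{1 - \hat{Q}(G,3)}\bigr)$, then Proposition~\ref{key} immediately yields $\mathcal{G}(G) \leq 3 = b(G)$.

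For any residual cases in which the subdegree bound just fails, I would apply Corollary~\ref{int}: perform a brief random search in \magma~\cite{magma} for an element $g \in G$ with $|H \cap H^g| < \mathcal{S}(G, H)$, using an available permutation representation of $G$. The main obstacle I anticipate concerns the large-index actions (for instance those where $H$ has a substantial unipotent-type normal subgroup) where the rank can be moderately large and no single subdegree dominates; here Corollary~\ref{int} is the more robust approach, since when $|G:H|$ is large then $|H|$ is correspondingly small, $\mathcal{S}(G,H)$ is comparatively generous, and conjugates $H^g$ of small intersection with $H$ are plentiful and easy to verify computationally.
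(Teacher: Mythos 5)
Your overall strategy is the same as the paper's: compute $\hat{Q}(G,3)$ and $\mathcal{D}(G,H)$ from the character tables and fusion maps, exhibit a subdegree exceeding $\mathcal{D}(G,H)$, and invoke Proposition~\ref{key}, with Corollary~\ref{int} as a fallback. However, your second step contains a genuine error: the individual subdegrees \emph{cannot} be ``extracted from the decomposition of the permutation character $1_H^G$ into $G$-irreducibles together with the values of these irreducibles on class representatives from $H$''. The permutation character determines fixed-point counts and hence the rank $\langle 1_H^G,1_H^G\rangle$, but not the orbit lengths of $H$ on $G/H$; two inequivalent decompositions of $G/H$ into $H$-orbits can be compatible with the same character-theoretic data, and recovering subdegrees in practice requires either explicit orbit computations or published intersection/collapsed-adjacency data. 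As described, this step would not produce the number $d$ you need to compare against $\mathcal{D}(G,H)$.

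The paper circumvents this in two ways that you nearly, but do not quite, articulate. For $H=\mathrm{O}_7(3)\times S_3$ and $H=3^{1+8}.2^{1+6}.3^{1+2}.2S_4$ it uses only the rank, via the pigeonhole bound that the largest subdegree is at least $(|G:H|-1)/(\langle 1_H^G,1_H^G\rangle-1)$; you compute the rank but never state this bound, which is the actual mechanism. For $H=\S_8(2)$ and $H=2^{11}.\M_{23}$ the paper instead quotes the explicitly computed subdegrees of Linton, Lux and Soicher~\cite{linton}, presumably because the rank bound alone is not strong enough there. Your fallback to Corollary~\ref{int} via a random search for a small $|H\cap H^g|$ is a legitimate alternative route for any case where the subdegree bound fails (and is feasible in the degree-$31671$ representation of $\mathrm{Fi}_{23}$), so the proof is rescuable, but as written the key step of ``reading off the subdegrees'' from character data alone does not work.
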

\begin{proof}
By Proposition~\ref{smallindex} we may assume that $H\in\{\S_8(2),\mathrm{O}_7(3)\times S_3,2^{11}.\M_{23},3^{1+8}.2^{1+6}.3^{1+2}.2S_4\}.$ Note that the character tables of $G$ and $H$ are available in the \gap~Character Table Library~\cite{ctlib}. Moreover, the class fusion is known and the fusion maps are available in~\gap,~whence $\mathcal{D}(G,H)$ may be computed precisely. We now handle each possible stabiliser $H$ separately.

If $H\in\{\S_8(2),2^{11}.\M_{23}\}$ then $\mathcal{D}(G,H)<10^7$, but in both cases the subdegrees have been determined by Linton, Lux, and Soicher ~\cite{linton}. In particular, if $H=\S_8(2)$ then $G$ has a suborbit of length $32901120>10^7$, and if $H=2^{11}.\M_{23}$ then $G$ has a suborbit of length $58032128>10^7$. Therefore, by Proposition~\ref{key}, in both cases we deduce that $\G(G)=3$.

Now, suppose $H=\mathrm{O}_7(3)\times S_3$. Then we calculate $\mathcal{D}(G,H)<10^7$, and that $\langle 1_H^G,1_H^G\rangle=14$. Therefore, $G$ has a subdegree which is at least $(|G:H|-1)/(14-1)=11434043>10^7$, thus $\G(G)=3$ by Proposition~\ref{key}.

Finally, if $H=3^{1+8}.2^{1+6}.3^{1+2}.2S_4$ then $\mathcal{D}(G,H)<23000000$ and $\langle 1_H^G,1_H^G\rangle=36$. Thus $G$ has a subdegree which is at least $(|G:H|-1)/35>35000000>23000000$, so the result follows from Proposition~\ref{key}.
\end{proof}
\begin{table}[H]
\centering

\begin{tabularx}{11.6cm}{llc|llc}
$G$ & $H$ & $b(G)$  &$G$ & $H$ & $b(G)$\\
\hline
$\mathrm{Fi}_{23}$&$\mathrm{S}_8(2)$&3		&$\mathrm{Fi}_{24}'$&$2.\mathrm{Fi}_{22}:2$&3\\
&$\mathrm{O}_7(3)\times S_3$&3	&&$(3\times \mathrm{O}_8^+(3):3):2$&3		\\
&$2^{11}.\M_{23}$&3&		&$\mathrm{O}_{10}^-(2)$&3\\
&$3^{1+8}.2^{1+6}.3^{1+2}.2S_4$&3&		&$3^7.\mathrm{O}_7(3)$&3		\\
&&&&$3^{1+10}:\mathrm{U}_5(2):2$&3
\\
$\mathrm{J}_4$&$2^{11} : \M_{24}$&3&&$2^{11}.\M_{24}$&3\\
&$2^{1+12}.3.\M_{22}:2$&3\\
&$2^{10}:\L_5(2)$&3	&$\mathrm{Fi}_{24}$&$(2\times2.\mathrm{Fi}_{22}):2$&3	\\
&&&&$S_3\times \mathrm{O}_8^+(3):S_3$&3\\
$\mathrm{Ly}$&$G_2(5)$&3&&$\mathrm{O}_{10}^-(2):2$&3\\
&$3.\mathrm{McL}:2$& 3&		&$3^7.\mathrm{O}_7(3):2$&3\\
&&&&$3^{1+10}:(\mathrm{U}_5(2):2\times2)$&3\\
$\mathrm{Co}_1$&$2^{1+8}.\mathrm{O}_8^+(2)$&3&&$2^{12}.\M_{24}$&3\\
&$\mathrm{U}_6(2):S_3$&3&&$(2\times2^2.\mathrm{U}_6(2)):S_3$&3\\
&$(A_4\times G_2(4)) : 2$&3\\
&$2^{2+12}:(A_8\times S_3)$&3&$\mathbb{B}$&$2.{}^2E_6(2):2$&4\\
&$2^{4+12}.(S_3\times3.S_6)$&3&&$2^{1+22}.\mathrm{Co}_2$&3\\
&&&&$\mathrm{Fi}_{23}$&3\\
$\mathrm{HN}$& $2.\mathrm{HS}.2$&3&&$2^{9+16}.\mathrm{S}_8(2)$&3\\
&$\mathrm{U}_3(8):3$&3&&$\mathrm{Th}$&3\\
&&&&$(2^2\times F_4(2)):2$&3\\
$\mathrm{HN.2}$& $4.\mathrm{HS}.2$&3&&$2^{2+10+20}.(\M_{22}:2\times \S_3)$&3\\
&$\mathrm{U}_3(8):6$&3\\
&&&$\mathbb{M}$&$2.\mathbb{B}$&3\\
$\mathrm{Th}$&${}^3D_4(2):3$&3\\
&$2^5.\mathrm{L}_5(2)$&3\\
\end{tabularx}
\caption{The groups not dealt with in Proposition~\ref{smallindex}}
\label{tab1}
\end{table}
We now present Table~\ref{tab2} which consists of groups for which Proposition~\ref{key} may easily be applied using our methods so far. Table~\ref{tab2} has entries $(G,H,d,a,\text{ref.})$; here $d$ is an upper bound on $\mathcal{D}(G,H)$ and $a$ is a lower bound on the size of a largest suborbit of $G$. The quantity $a$ is obtained from the technique or reference in the fifth column, where `rank' refers to the calculation  $$(|G:H|-1)/(\langle 1_H^G,1_H^G\rangle-1).$$ Since each entry $(G,H)$ of Table~\ref{tab2} is such that $a>d$ we immediately deduce the following result.

\begin{prop}\label{tabprop1}
Suppose $(G,H)$ are listed in Table~\ref{tab2}. Then $\mathcal{G}(G)=3$.
\end{prop}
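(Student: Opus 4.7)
The plan is to apply Proposition~\ref{key} uniformly to every pair $(G,H)$ in Table~\ref{tab2}. Since the table is constructed so that $d$ is an upper bound on $\mathcal{D}(G,H)$ and $a$ is a lower bound on some subdegree of $G$ with $a > d$, the main task of the proof collapses to certifying the two quantities $d$ and $a$ and observing that the inequality $a > d$ forces the hypothesis of Proposition~\ref{key}. Combined with the trivial lower bound $\mathcal{G}(G) \geq b(G) = 3$ (with $b(G)=3$ read off from Table~\ref{tab1}), this yields $\mathcal{G}(G) = 3$.

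For the upper bound $d \geq \mathcal{D}(G,H)$, I would follow the remark after Corollary~\ref{int}: for each $(G,H)$, extract from the \gap~Character Table Library~\cite{ctlib} the conjugacy classes of prime-order elements of $G$, the conjugacy classes of prime-order elements of $H$, and the class-fusion map from $H$ to $G$. Summing $|x_i^G|\cdot(|x_i^G\cap H|/|x_i^G|)^3$ over a transversal of the prime-order classes gives $\hat{Q}(G,3)$; in every case this is less than $1$ (which is needed for $\mathcal{D}(G,H)$ to make sense), and $\mathcal{D}(G,H) = |G:H|(1-\sqrt{1-\hat{Q}(G,3)})$ can then be bounded above by $d$ through routine interval arithmetic.

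For the lower bound $a$ on a subdegree, the reference column of Table~\ref{tab2} specifies one of two routes. Either an explicit subdegree is on record in the literature, in which case we cite it (for instance, the Linton--Lux--Soicher computations~\cite{linton} for certain Fischer-group actions). Otherwise, we compute the rank $r := \langle 1_H^G, 1_H^G\rangle$ of the permutation character via the induced character in \gap~\cite{gap,ctlib}, and take $a = \lceil(|G:H|-1)/(r-1)\rceil$; since this is the average size of a non-trivial suborbit, some suborbit must be at least this large. The value of $r$ is recoverable directly from the character table and fusion data already used for $d$.

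With $a > d$ verified in each row, Proposition~\ref{key} gives $\mathcal{G}(G) \leq 3$, and hence $\mathcal{G}(G) = 3$. The main obstacle is not mathematical but rather availability of data: a few of the stabilisers in Table~\ref{tab2} (in particular certain large local subgroups of $\mathbb{B}$ and $\mathbb{M}$) might have incomplete fusion information in \gap, in which case one would need either to compute the fusion separately (using a faithful matrix or permutation representation in \magma~\cite{magma}) or to replace the rank computation by a cruder bound such as the largest index of $G_\alpha\cap G_\beta^g$ obtainable from the subgroup structure. For the groups actually listed in Table~\ref{tab2}, however, the library data suffice and the verification reduces to the bulk arithmetic check that each row satisfies $a > d$.
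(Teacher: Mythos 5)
Your proposal is correct and is essentially identical to the paper's argument: the paper likewise obtains $d\geq\mathcal{D}(G,H)$ from the character-table and fusion data in \gap, takes $a$ either from the cited literature or from the averaging bound $(|G:H|-1)/(\langle 1_H^G,1_H^G\rangle-1)$, and concludes immediately from $a>d$ via Proposition~\ref{key}. No substantive difference to report.
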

As the techniques needed are similar to what we have done so far, we now complete the determination of primitive actions of $G=\mathbb{B}$ with $\G(G)=3$.
\begin{prop}
Let $G=\mathbb{B}$ acting primitively with point stabiliser $H$ and suppose that $b(G)=3$. Then $\G(G)=3$.
\end{prop}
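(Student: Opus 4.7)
By Proposition~\ref{smallindex} and Table~\ref{tab1}, we may assume $H$ is one of
\[
2^{1+22}.\mathrm{Co}_2,\ \mathrm{Fi}_{23},\ 2^{9+16}.\S_8(2),\ \mathrm{Th},\ (2^2\times F_4(2)):2,\ 2^{2+10+20}.(\M_{22}:2\times\S_3).
\]
My plan is to treat each of these actions in the spirit of the $\mathrm{Fi}_{23}$ example worked above: bound $\hat{Q}(G,3)$ from the \gap~Character Table Library~\cite{ctlib}, and then apply either Proposition~\ref{key} by exhibiting a subdegree greater than $\mathcal{D}(G,H)$, or Corollary~\ref{int} by producing $g\in G$ with $|H\cap H^g|<\mathcal{S}(G,H)$.

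For the four stabilisers $\mathrm{Fi}_{23}$, $\mathrm{Th}$, $(2^2\times F_4(2)):2$, and $2^{1+22}.\mathrm{Co}_2$, the character tables of both $G$ and $H$ together with the class fusion of $H$ in $G$ are present in the library. Thus $\hat{Q}(G,3)$ and hence $\mathcal{D}(G,H)$ can be computed exactly from the sizes $|x^G|$ and $|x^G\cap H|$ at each prime-order class of $G$, and inducing the trivial character of $H$ gives the permutation character $1_H^G$; its squared norm $\langle 1_H^G,1_H^G\rangle$ then yields the lower bound $(|G:H|-1)/(\langle 1_H^G,1_H^G\rangle -1)$ on the largest subdegree. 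I expect this to comfortably exceed $\mathcal{D}(G,H)$ in each of the four cases, whereupon Proposition~\ref{key} applies.

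The remaining two stabilisers are the large $2$-local subgroups $2^{9+16}.\S_8(2)$ and $2^{2+10+20}.(\M_{22}:2\times\S_3)$; for these the character table of $H$ (and so the class fusion) need not be available in the library, so a different route is required. Here I would bound $\hat{Q}(G,3)$ from above using the fixed point ratio estimates for prime-order elements of $\mathbb{B}$ acting on $G/H$ already worked out in~\cite{bspo,Bfin}, and then apply Corollary~\ref{int}: construct $G$ and $H$ in \magma\ from the Web Atlas~\cite{webatlas} straight-line programs inside a tractable faithful representation (e.g.\ the $4370$-dimensional $\F_2$-module), and run a random search for $g$ with $|H\cap H^g|$ below $\mathcal{S}(G,H)$.

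The principal obstacle is the last case $H=2^{2+10+20}.(\M_{22}:2\times\S_3)$, which is the entry appearing beyond~\cite[Tables 1 and 2]{bspo}: it is the largest stabiliser with $b(G)=3$, its fixed point ratio analysis is the most intricate, and subgroup intersection computations inside a matrix representation of $\mathbb{B}$ are expensive. Ensuring that the random search terminates in reasonable time, and that the resulting upper bound on $\hat{Q}(G,3)$ is strong enough to make $\mathcal{S}(G,H)$ usable in Corollary~\ref{int}, will be the crux of the proof.
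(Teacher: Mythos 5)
Your proposal is a plan rather than a proof, and the case you correctly identify as the crux is exactly the one your plan cannot handle. For $H=2^{2+10+20}.(\M_{22}:2\times\S_3)$ the paper runs no random search and does not even need the probabilistic machinery: by \cite[Table 1]{Bfin} this action has a suborbit of length $11429423370731520=|H|/2$, and since $b(G)=3$ rules out a regular suborbit this is the largest possible subdegree; hence every smallest $2$-point stabiliser has order $2$ and therefore automatically has a regular orbit, giving $\G(G)=3$ outright. Your alternative --- constructing $H$ and computing $|H\cap H^g|$ inside a $4370$-dimensional representation of $\mathbb{B}$ over $\F_2$ --- is not a realistic computation for a subgroup of order roughly $2.3\times10^{16}$, and you give no reason to believe it would terminate or that the resulting bound on $\hat{Q}(G,3)$ would make $\mathcal{S}(G,H)$ usable; you acknowledge this but leave it unresolved, which is a genuine gap rather than a deferred detail.

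Two smaller points. First, Proposition~\ref{tabprop1} (via Table~\ref{tab2}) has already disposed of $2^{1+22}.\mathrm{Co}_2$, $\mathrm{Fi}_{23}$, $2^{9+16}.\S_8(2)$ and $\mathrm{Th}$, so only $(2^2\times F_4(2)):2$ and the $2$-local subgroup above remain; if you insist on redoing the four, note that the rank bound does not ``comfortably exceed'' $\mathcal{D}(G,H)$ for $H=2^{1+22}.\mathrm{Co}_2$ --- there the paper needs the explicit largest subdegree $6.8\times10^{12}$ from \cite[Table 1]{mul1} against $\mathcal{D}(G,H)\leq3.1\times10^{12}$, and $(|G:H|-1)/(\langle 1_H^G,1_H^G\rangle-1)$ falls short. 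Second, for $H=(2^2\times F_4(2)):2$ the class fusion is \emph{not} stored in the library, contrary to your assertion; one must enumerate all candidates with {\tt PossibleClassFusions()} and check that the fixed point ratios and the induced permutation character are independent of the choice, which is what the paper does before concluding $\mathcal{D}(G,H)\leq5.3\times10^{10}$ and rank $163$. With these repairs your treatment of $(2^2\times F_4(2)):2$ matches the paper's; without the $|H|/2$ observation the final case remains open.
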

\begin{proof}
By Propositions~\ref{smallindex} and \ref{tabprop1} we may assume that ${H\in\{(2^2\times F_4(2)):2,2^{2+10+20}.(\M_{22}:2\times \S_3)\}}$. If $H=(2^2\times F_4(2)):2$, the fusion map is not known, but we can compute all possible class fusions using the \gap~\cite{gap} command {\tt{PossibleClassFusions()}}. We find that the fixed point ratios do not vary between each possible fusion map so we can compute $\mathcal{D}(G,H)$ as normal. This yields ${\mathcal{D}(G,H)\leq5.3\times10^{10}}$. Similarly, we deduce that $G$ has rank 163 by calculating the permutation character using the {\tt{InducedClassFunctionsByFusionMap()}} command. Therefore, $G$ has a suborbit of length at least $$(|G:H|-1)/162>9.6\times 10^{14}>\mathcal{D}(G,H),$$ whence $\G(G)=3$ by Proposition~\ref{key}.

On the other hand, suppose $H=2^{2+10+20}.(\M_{22}:2\times \S_3)$. Then $G$ has a suborbit of length $$11429423370731520=|H|/2$$ by \cite[Table 1]{Bfin}. In particular any smallest 2-point stabiliser in $G$ has order 2 and so has a regular orbit. Therefore, $\G(G)=3$ as desired.
\end{proof}
\begin{table}[H]
\centering

\begin{tabularx}{11.8cm}{lllll}
$G$ & $H$ & $d$  &$a$&ref.\\
\hline

$\mathrm{J}_4$&$2^{11} : \M_{24}$&$1.1\times10^7$&$8.2\times10^7$&\cite[Theorem 8.26]{iv}\\
&$2^{1+12}.3.\M_{22}:2$&$1.5\times10^6$&$1.9\times10^8$&rank\\
&$2^{10}:\L_5(2)$&$7.3\times10^5$	&$3\times 10^8$&rank	\\\\
$\mathrm{Ly}$&$G_2(5)$&$2.6\times 10^5$&$5.8\times10^6$&$\mathbb{ATLAS}$~\cite{atlas}\\
&$3.\mathrm{McL}:2$& $5.6\times10^5$&	$7.1\times10^6$	&$\mathbb{ATLAS}$~\cite{atlas}\\\\

$\mathrm{Co}_1$&$2^{1+8}.\mathrm{O}_8^+(2)$&$10^7$&$2.5\times10^7$&\cite[Table 2]{bates}\\
&$(A_4\times G_2(4)) : 2$&$3.4\times 10^6$&$2.9\times 10^7$&rank\\
&$2^{2+12}:(A_8\times S_3)$&$1.6\times 10^6$&$4.9\times 10^7$&rank\\
&$2^{4+12}.(S_3\times3.S_6)$&$1.7\times 10^6$&$5.3\times 10^7$&rank\\\\
$\mathrm{HN}$& $\mathrm{U}_3(8):3$&$7.1\times10^3$&$9.1\times10^5$&rank\\\\
$\mathrm{HN.2}$&$\mathrm{U}_3(8):6$&$9\times10^3$&$1.1\times10^6$&rank\\\\
$\mathrm{Th}$&${}^3D_4(2):3$&$3.5\times10^4$&$1.4\times10^7$&rank\\
&$2^5.\mathrm{L}_5(2)$&$2.1\times10^4$&$2.8\times10^7$&rank\\\\
$\mathrm{Fi}'_{24}$&$(3\times\mathrm{O}_{8}^+(3):3):2$&$5.1\times10^8$&$10^9$&rank\\
&$\mathrm{O}_{10}^-(2)$&$5.2\times10^8$&$3.1\times10^9$&rank\\
&$3^7.\mathrm{O}_7(3)$&$5.7\times 10^7$&$7.3\times10^9$&rank\\
&$3^{1+10}:\mathrm{U}_5(2):2$&$1.4\times10^8$&$9.5\times10^9$&rank\\
&$2^{11}.\M_{24}$&$3.3\times10^8$&$2.1\times10^{10}$&rank\\\\
$\mathrm{Fi}_{24}$&$\mathrm{O}_{10}^-(2):2$&$6\times10^8$&$3.1\times10^9$&rank\\
&$3^7.\mathrm{O}_7(3):2$&$1.1\times10^9$&$5.2\times10^{10}$&\cite[Theorem 6.8]{linton}\\
&$3^{1+10}:(\mathrm{U}_5(2):2\times2)$&$4.7\times10^9$&$9.5\times10^9$&rank\\
&$2^{12}.\M_{24}$&$3.3\times10^8$&$3.1\times10^{10}$&rank\\
&$(2\times2^2.\mathrm{U}_6(2)):S_3$&$1.1\times10^{10}$&$2.4\times10^{10}$&rank\\\\
$\mathbb{B}$&$2^{1+22}.\mathrm{Co}_2$&$3.1\times10^{12}$&$6.8\times 10^{12}$&\cite[Table 1]{mul1}\\
&$\mathrm{Fi}_{23}$&$6.1\times 10^{11}$&$2.8\times 10^{14}$&\cite[Table 2]{mul2}\\
&$2^{9+16}.\mathrm{S}_8(2)$&$8.4\times 10^{11}$&$8.4\times 10^{13}$&rank\\
&$\mathrm{Th}$&$2.5\times 10^8$&$1.3\times 10^{15}$&rank\\\\
$\mathbb{M}$&$2.\mathbb{B}$&$1.1\times10^{19}$&$1.2\times10^{19}$&rank\\
\end{tabularx}
\caption{Some groups for which Proposition~\ref{key} may easily be applied.}\label{tab2}
\end{table}
We now move on to use our random search technique. We first illustrate this method explicitly in the case $G=\mathrm{Co}_1$.
\begin{prop}
Let $G=\mathrm{Co}_1$ acting primitively with point stabiliser $H$. Then $\G(G)=b(G)$.
\end{prop}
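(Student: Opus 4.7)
By Proposition~\ref{smallindex} and Proposition~\ref{tabprop1} (applied to the entries for $\mathrm{Co}_1$ in Table~\ref{tab2}), the only maximal subgroup of $\mathrm{Co}_1$ still to consider is $H=\mathrm{U}_6(2){:}S_3$. My plan is to dispatch this case using Corollary~\ref{int} rather than Proposition~\ref{key}, because here the index $|G{:}H|$ and the ranks of candidate permutation representations are large enough that the rank-based bound $(|G{:}H|-1)/(\mathrm{rank}-1)$ is unlikely to exceed $\mathcal D(G,H)$, and the subdegrees for this action do not appear to be tabulated in a form directly usable as in the $\mathrm{Fi}_{23}$ example.

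First I would use the \gap~Character Table Library~\cite{ctlib} to access the character tables of $G=\mathrm{Co}_1$ and $H=\mathrm{U}_6(2){:}S_3$ together with the stored class fusion from $H$ into $G$. From these data I compute $|x^G\cap H|/|x^G|$ for each class of prime-order elements $x$ of $G$ and sum the contributions $|x^G|\cdot(\mathrm{fpr}(x,G/H))^3$ to obtain $\hat Q(G,3)$. A short check should confirm $\hat Q(G,3)<1$, after which $\mathcal S(G,H)=|H|^2/\bigl(|G|(1-\sqrt{1-\hat Q(G,3)})\bigr)$ is a routine arithmetic evaluation.

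Next I would construct $G$ and $H$ in \magma, taking a permutation or matrix representation of $\mathrm{Co}_1$ (standard generators are available on the Web Atlas~\cite{webatlas}) and building $H$ from a straight-line program for the generators of $\mathrm{U}_6(2){:}S_3$. I then run a random search: repeatedly choose $g\in G$ uniformly at random, compute the order of $H\cap H^g$, and halt when this order drops below $\mathcal S(G,H)$. Corollary~\ref{int} then yields $\mathcal G(G)\leq 3$, and since $b(G)=3$ for this stabiliser by~\cite[Tables 1 and 2]{bspo}, we conclude $\mathcal G(G)=b(G)$.

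The main obstacle is the practical cost of the random search: the intersection $H\cap H^g$ must be computed many times inside a group of order exceeding $10^{18}$, so one needs a permutation representation of manageable degree in which the subgroup operations are efficient. If the direct intersection is too slow in the natural representation of $G$, a reasonable fallback is to work in the coset action on $G/H$ and compute point stabilisers $G_{\alpha,\beta}$ for random pairs $(\alpha,\beta)$, which is equivalent up to conjugation. Once a single $g$ with $|H\cap H^g|<\mathcal S(G,H)$ is found the argument is complete, so the search only needs to succeed once — and because the fraction of such $g$ equals roughly the probability of producing a base, the expected number of trials is small provided $\hat Q(G,3)$ is well under $1$.
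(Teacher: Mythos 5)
Your proposal matches the paper's proof essentially exactly: after reducing to $H=\mathrm{U}_6(2){:}S_3$ via Propositions~\ref{smallindex} and~\ref{tabprop1}, the paper computes $\mathcal{S}(G,H)>4300$ from the \gap~character table data and fusion map, then runs {\tt Order( H meet H\string^Random(G) )} in \magma\ until finding $|H\cap H^g|=1536<4300$, concluding by Corollary~\ref{int}. Your practical concerns about the random search are reasonable but turn out not to be an obstacle in practice.
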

\begin{proof}
By Propositions~\ref{smallindex} and \ref{tabprop1} we may assume that $H=\mathrm{U}_6(2):S_3$. The relevant character tables and fusion maps are available in the \gap~Character Table Library~\cite{ctlib} and so we may directly compute $\mathcal{S}(G,H)>4300$. Now, in \magma~\cite{magma} we compute {\tt{Order( H meet H\string^Random(G) )}} until eventually finding $g\in G$ such that $|H\cap H^g|=1536<4300$. The result follows from Corollary~\ref{int}.
\end{proof}
We now present Table~\ref{table3} which includes the remaining groups for which the technique of the above result proves fruitful. The entries of the table are $(G,H,s,|H\cap H^g|)$. The entry $s$ is a lower bound on the quantity $\mathcal{S}(G,H)$, and $|H\cap H^g|$ is the order of some intersection of conjugates of $H$ found using a random search.

\begin{table}[h]
\centering
\begin{tabularx}{7.1cm}{llll}
$G$ & $H$ & $s$  &$|H\cap H^g|$\\
\hline
$\mathrm{HN}$&$2.\mathrm{HS}.2$&790&240\\\\
$\mathrm{HN}.2$&$4.\mathrm{HS}.2$&1100&480\\\\
$\mathrm{Fi}'_{24}$&$2.\mathrm{Fi}_{22}:2$&390000&93312\\\\
$\mathrm{Fi}_{24}$&$(2\times2.\mathrm{Fi}_{22}):2$&200000&186624\\
&$S_3\times\mathrm{O}_8^+(3):S_3$&49000&27648\\\\
\end{tabularx}
\caption{Some groups for which the random search method with Corollary~\ref{int} proves useful.}
\label{table3}
\end{table}
As a consequence of the data in Table~\ref{table3}, we deduce from Corollary~\ref{int} that the groups present have greedy base size 3.
\begin{prop}\label{tabprop2}
Suppose $(G,H)$ are listed in Table~\ref{table3}. Then $\mathcal{G}(G)=3$.
\end{prop}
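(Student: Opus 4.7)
The plan is to apply Corollary~\ref{int} to each pair $(G,H)$ listed in Table~\ref{table3}, in exactly the manner illustrated by the preceding worked example with $G=\mathrm{Co}_1$ and $H=\mathrm{U}_6(2):S_3$. Fix such a pair. First I would verify, using the \gap~Character Table Library~\cite{ctlib}, that the character tables of $G$ and $H$ and the fusion of $H$-classes into $G$-classes are available, so that the sum
\[
\hat{Q}(G,3)=\sum_{i=1}^{k}|x_i^G|\left(\frac{|x_i^G\cap H|}{|x_i^G|}\right)^{3},
\]
taken over representatives $x_1,\dots,x_k$ of the $G$-classes of prime order elements, can be computed exactly. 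In each of the five cases one checks that $\hat{Q}(G,3)<1$, whence the quantity
\[
\mathcal{S}(G,H)=\frac{|H|^2}{|G|\left(1-\sqrt{1-\hat{Q}(G,3)}\right)}
\]
is positive and admits the lower bound $s$ recorded in the fourth column of Table~\ref{table3}.

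Next I would exhibit, for each pair, an element $g\in G$ such that $|H\cap H^g|$ equals the value in the final column of Table~\ref{table3}; these are found by the random search \texttt{Order(H meet H\string^Random(G))} in \magma~\cite{magma}, run until an intersection of sufficiently small order is encountered. A direct inspection of Table~\ref{table3} shows that in every row the listed value $|H\cap H^g|$ is strictly smaller than $s\leq\mathcal{S}(G,H)$. Corollary~\ref{int} then gives $\mathcal{G}(G)\leq 3$.

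Finally, since the base sizes of all of the relevant groups are known to equal $3$ (see~\cite{bspo,Bfin}, as compiled in Table~\ref{tab1}), and since $\mathcal{G}(G)\geq b(G)$ always holds because a greedy base is in particular a base, we obtain $\mathcal{G}(G)=3$, as required.

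The main obstacle is purely computational rather than conceptual: one needs the character-theoretic data (including fusion) to be accessible in order to evaluate $\hat{Q}(G,3)$, and for the larger groups in the list, such as $\mathrm{Fi}_{24}$ with $H=(2\times 2.\mathrm{Fi}_{22}):2$, the random search for a small intersection $H\cap H^g$ may require some care to carry out in \magma, but both ingredients have already been used successfully in earlier propositions of this section, so no new difficulty arises.
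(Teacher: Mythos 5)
Your proposal is correct and is exactly the paper's argument: the paper simply records the bounds $s\leq\mathcal{S}(G,H)$ and the intersection orders $|H\cap H^g|$ found by random search in Table~\ref{table3} and invokes Corollary~\ref{int}, together with the known value $b(G)=3$, precisely as you do. No difference in approach.
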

It now only remains to consider the action of $\mathbb{B}$ with point stabiliser $2.{}^2E_6(2).2$. Of course, as stated our probabilistic approach does not apply here since this group has base size 4, however, with a bit of work we can extend the method to this case.

\begin{prop}\label{last}
Let $G=\mathbb{B}$ with point stabiliser $H=2.{}^2E_6(2).2$. Then $\G(G)=4$.
\end{prop}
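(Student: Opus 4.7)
Since $b(G)=4$, we have $\mathcal{G}(G)\geq 4$, so the task is to prove $\mathcal{G}(G)\leq 4$. The plan is to extend the fixed-point-ratio method of Proposition~\ref{key} to the $b(G)=4$ setting by pushing the same inclusion-exclusion argument one step deeper.

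Specifically, I expect to establish and then apply the following analogue of Proposition~\ref{key}: if $\hat{Q}(G,4)<1$ and $d_1$, $d_2$ are, respectively, the size of a largest $G_{\alpha_1}$-orbit on $\Omega$ and the size of a largest $G_{\alpha_1,\alpha_2}$-orbit on $\Omega$ (for $\alpha_2$ in the chosen largest $G_{\alpha_1}$-orbit), and if
\[
d_1 \, d_2 \;>\; |\Omega|^2 \, \hat{Q}(G,4),
\]
then $\mathcal{G}(G)\leq 4$. The justification parallels that of Proposition~\ref{key}: a uniform tuple $(\alpha_1,\alpha_2,\alpha_3,\alpha_4)\in\Omega^4$ satisfies the greedy conditions (namely $\alpha_2$ in the chosen largest $G_{\alpha_1}$-orbit and $\alpha_3$ in the chosen largest $G_{\alpha_1,\alpha_2}$-orbit) with probability $d_1 d_2/|\Omega|^2$ and is a base with probability at least $1-\hat{Q}(G,4)$. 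If these two probabilities sum to more than $1$, both events hold simultaneously for some tuple, producing a base whose first three entries form a valid greedy sequence; the conjugation argument from the proof of Proposition~\ref{key} then upgrades this to the statement that $G_{\alpha_1,\alpha_2,\alpha_3}$ has a regular orbit for every greedy choice of $(\alpha_1,\alpha_2,\alpha_3)$, yielding $\mathcal{G}(G)\leq 4$.

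To apply this extension, I would first compute $\hat{Q}(G,4)$ from the \gap~Character Table Library~\cite{ctlib}, since the character tables of $G$ and $H$ and their class fusion are available there. Next, $d_1$ is supplied by the literature on the rank-$5$ conjugation action of $\mathbb{B}$ on its class $2A$ of $\{3,4\}$-transpositions, whose subdegrees are well documented. Finally, a sufficiently large lower bound on $d_2$ is needed; since the two-point stabiliser $L=G_{\alpha_1,\alpha_2}$ remains too large for direct orbit computation on $\Omega$, I plan to extract the largest $L$-orbit size either from the triple intersection numbers of the $2A$-association scheme of $\mathbb{B}$, or from a character-theoretic computation involving the restriction of the permutation character $1_H^G$ to $L$.

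This last step is the principal obstacle. If the straightforward inequality turns out to be too tight, natural refinements include the factor-of-two improvement obtained by allowing $\alpha_3$ and $\alpha_4$ to swap roles as in the proof of Proposition~\ref{key}, or a random-search adaptation of Corollary~\ref{int}: searching for elements $g_1,g_2\in G$ such that $|H\cap H^{g_1}\cap H^{g_2}|$ is small and the cosets $Hg_1$, $Hg_1g_2$ lie in the appropriate greedy orbits, thereby directly producing the required lower bound on $d_1d_2$.
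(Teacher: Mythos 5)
Your overall strategy coincides with the paper's: extend the probabilistic argument of Proposition~\ref{key} one step, comparing the probability that a random $4$-tuple satisfies the greedy conditions with $1-\hat{Q}(G,4)$. However, as stated your inequality fails numerically for this action, and the refinement you sketch does not close the gap. The degree is $13571955000$, the unique largest suborbit has length $d_1=11174042880$ (so $d_1/|\Omega|\approx0.82$), and the realistic lower bound on a largest orbit of the two-point stabiliser $K_1=2^{1+20}.\mathrm{U}_4(3).2^2$ is $d_2\geq|K_1|/15552=1761607680$ (so $d_2/|\Omega|\approx0.13$), obtained in the paper by explicitly constructing $K_1$ inside ${}^2E_6(2).2$ as a $78$-dimensional matrix group over $\mathrm{GF}(2)$ and random-searching for a small $|K_1\cap K_1^g|$. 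Thus $d_1d_2/|\Omega|^2\approx0.11$, while the computable bound on $\hat{Q}(G,4)$ is only $3/10$; even the factor-of-two improvement from letting $\alpha_3$ and $\alpha_4$ swap roles yields only about $0.20<0.3$.

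The missing idea is structural rather than a further inclusion–exclusion: the central involution of $H=2.{}^2E_6(2).2$ lies outside $K_1$ (and outside $K_2=\mathrm{Fi}_{22}.2$), so the $K_1$-orbits on $G/H$ fall into pairs of \emph{equivalent} orbits with identical point stabilisers. One may therefore run the argument with the union of such a pair in place of a single largest orbit --- a base whose third point lies in either member certifies a regular orbit for the common three-point stabiliser --- which doubles the effective $d_2$; the same pairing caps the lengths of the $K_2$-orbits at $1185415168$ and, after comparison with the three remaining suborbits, shows that a largest $K_1$-orbit genuinely lies inside the largest suborbit (a point your proposal does not address but which is needed for the greedy choice to land where you want it). Combined with a reordering over all of positions $2,3,4$, conditioning on how many of the three entries lie in the unique largest suborbit, this pushes the relevant probability above $4/10>\hat{Q}(G,4)$. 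Without the equivalence observation your bound cannot reach $3/10$, and your proposed routes to $d_2$ (triple intersection numbers of the $2A$-scheme, restriction of $1_H^G$) are left unexecuted and are of doubtful feasibility at this scale; so the proposal has both a quantitative gap and an unproved key input.
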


Before the proof we remind the reader of a useful definition. Let $G\leq\mathrm{Sym}(\Omega)$ and let $O_1$ and $O_2$ be orbits of $G$. The orbits $O_1$ and $O_2$ are \emph{equivalent} if for each $\alpha_1\in O_1$ there is some $\alpha_2\in O_2$ such that $G_{\alpha_1}=G_{\alpha_2}$. Define suborbits to be equivalent if the analogous condition holds.
\begin{proof}[Proof of Proposition~\ref{last}]
We estimate the probability that a randomly chosen 4-tuple contains three entries which may be ordered so that each is contained in a (fixed) largest orbit of the pointwise stabiliser of the points preceding it. We compare this with the probability that the 4-tuple forms a base, and deduce that with non-zero probability both occur simultaneously, from which the result will follow.

First note that $G$ has degree 13571955000. The subdegrees of $G$ and the corresponding 2-point stabilisers are known~\cite{stro} --- the largest two are 11174042880 with point stabiliser $K_1:=2^{1+20}. \mathrm{U}_{4}(3). 2^2$, and 2370830336 with point stabiliser $K_2:=\mathrm{Fi}_{22}.2$. We consider now the actions of $H$ on the cosets of these two point stabilisers.

We aim to get a lower bound on the largest subdegree of $H$ acting on the coset space $H/K_1$. We begin by noting that the central involution of $H$ is not in $K_1$, and so it is enough to consider the action of $E:={}^2E_6(2).2$ on $E/K_1$ --- each suborbit here corresponds to a pair of equivalent suborbits for $H$ acting on $H/K_1$. The group $E$ can be constructed within \magma~\cite{magma} as a 78-dimensional matrix group over $\mathrm{GF}(2)$. Now, $K_1$ is contained within the maximal subgroup $2^{1+20}. \mathrm{U}_{6}(2). 2<E$, which we construct as the centraliser in $E$ of a 2A-involution~\cite{atlas}. Working now in the smaller group $2^{1+20}. \mathrm{U}_{6}(2). 2$ we construct $K_1$ using a random search; we confirm the constructed group to be $K_1$ by computing the composition tree in \magma. Finally, a 2-point stabiliser in $E$ is of the form $K_1\cap K_1^g$ for some $g\in E$ --- a random search for such groups finds one of order 15552, and so $H$ has two equivalent longest suborbits on $H/K_1$, each of length at least $|K_1|/15552=1761607680$.

As was the case for $K_1$, the central involution of $H$ lies outside of $K_2$, and so the suborbits of $H$ acting on $H/K_2$ can be partitioned into pairs of equivalent suborbits. In particular, no subdegree is larger than $2370830336/2=1185415168<1761607680$. Moreover, the remaining three suborbits of $G$ each have size less than $1761607680$, and thus any largest orbit of $K_1$ acting on $G/H$ has size at least $1761607680$ and is paired with an equivalent orbit.

We now have all of the ingredients we need for the probabilistic argument. Let $d\geq 176167680$ be the size of a largest orbit of $K_1$ on $G/H$. For each $\alpha\in\Omega$ and each $\beta$ in the unique largest $G_{\alpha}$-orbit let $O_1(\alpha,\beta),O_2(\alpha,\beta),\dots,O_k(\alpha,\beta)$ be the unions of pairs of equivalent $G_{\alpha,\beta}$-orbits of size $d$. Fix some $1\leq i\leq k$. Given a 4-tuple $(\alpha_1,\alpha_2,\alpha_3,\alpha_4)$, the probability that $\alpha_2$, $\alpha_3$, and $\alpha_4$ all lie in the largest $G_{\alpha_1}$-orbit is $$\left(\frac{11174042880}{13571955000}\right)^3>558/1000,$$ and the probability that exactly two of $\alpha_2$, $\alpha_3$, or $\alpha_4$ lie in the largest $G_{\alpha_1}$-orbit is $$3\cdot\frac{11174042880^2\cdot(13571955000-11174042880)}{13571955000^3}> 359/1000.$$ Now, given $\alpha\in \Omega$ and three points $(\beta,\gamma,\delta)$ of the largest $G_{\alpha}$-orbit, the probability that either $\gamma$ or $\delta$ is in $O_i(\alpha,\beta)$ is at least $$2\cdot\frac{2\cdot1761607680}{11174042880}-\left(\frac{2\cdot1761607680}{11174042880}\right)^2>531/1000.$$ On the other hand, given $\alpha$ and a pair $(\beta,\gamma)$ of points of the largest $G_{\alpha}$-orbit, the probability that $\gamma$ is in $O_i(\alpha,\beta)$ is at least $$(2\cdot1761607680)/11174042880>315/1000.$$ We deduce that the probability that a uniformly chosen 4-tuple may be reordered as $(\alpha_1,\alpha_2,\alpha_3,\alpha_4)$ so that $\alpha_{2}$ is in the largest $G_{\alpha_{1}}$-orbit and $\alpha_3$ is in $O_i(\alpha_1,\alpha_2)$ is at least $$p_i:=(558\cdot531+359\cdot315)/10^6> 4/10.$$ Finally, the character tables of $G$ and $H$ are available in the \gap~Character Table Library~\cite{ctlib}, as is the class fusion, and so we can compute $\hat{Q}(G,4)<3/10$ directly. Therefore, the probability that a uniformly chosen $4$-tuple is a base, and may be reordered as $(\alpha_1,\alpha_2,\alpha_3,\alpha_4)$ so that $\alpha_{2}$ is in the largest $G_{\alpha_{1}}$-orbit and $\alpha_3$ is in $O_i(\alpha_1,\alpha_2)$ is at least $p_i+(1-\hat{Q}(G,4))-1>0$. That is, if $(\alpha,\beta,\gamma)$ are such that $\beta$ is in the largest $G_{\alpha}$-orbit and $\gamma\in O_i(\alpha,\beta)$, then $G_{\alpha,\beta,\gamma}$ has a regular orbit. Since this holds for each $i$ it follows that $\G(G)=4$, as desired.
\end{proof}
Combining the results of this section completes the proof of Theorem~\ref{main}.

\end{document}